\newtheorem{theorem}{Theorem}[section]
\newtheorem{lemma}[theorem]{Lemma}
\theoremstyle{definition}
\theoremstyle{remark}
\newtheorem{remark}[theorem]{Remark}
\numberwithin{equation}{section}
\newcommand{\calL}{\mathcal{L}}
\newcommand{\calR}{\mathcal{R}}
\newcommand{\scrA}{\mathscr{A}}
\newcommand{\scrD}{\mathscr{D}}
\newcommand{\scrW}{\mathscr{W}}
\DeclareMathOperator{\Sym}{Sym}
\DeclareMathOperator{\pr}{pr}
\DeclareMathOperator{\Span}{span}
\DeclareMathOperator{\tr}{tr}
\DeclareMathOperator{\dv}{div}
\DeclareMathOperator{\Ric}{Ric}
\DeclareMathOperator{\SO}{SO}
\DeclareMathOperator{\SU}{SU}
\newcommand{\ve}{\varepsilon}
\begin{document}

\title[A Lichnerowicz Estimate for the Sub-Laplacian]{A Lichnerowicz Estimate for the Spectral Gap of the Sub-Laplacian}


\author[S.~M.~Berge]{Stine Marie Berge}
\address{Department of Mathematical Sciences, Norwegian University of Science and Technology, 7491 Trondheim, Norway}
\email{stine.m.berge@ntnu.no}

\author[E.~Grong]{Erlend Grong}
\address{Universit\'e Paris Sud, Laboratoire des Signaux et Syst\`emes (L2S) Sup\'elec, CNRS, Universit\'e Paris-Saclay, 3 rue Joliot-Curie, 91192
Gif-sur-Yvette, France and University of Bergen, Department of Mathematics, P. O. Box 7803,
5020 Bergen, Norway.}
\email{erlend.grong@gmail.com}
\thanks{The second author is supported by the Research Council of Norway (project number 249980/F20).
 The authors were partially supported by the joint NFR-DAAD project 267630/F10.
Results are partially based on the first authors Master Thesis at the University of Bergen, Norway.}

\subjclass[2010]{47A75,35H20,53C17}

\date{}

\dedicatory{}


\begin{abstract}
For a second order operator on a compact manifold satisfying the strong H\"ormander condition, we give a bound for the spectral gap analogous to the Lichnerowicz estimate for the Laplacian of a Riemannian manifold. We consider a wide class of such operators which includes horizontal lifts of the Laplacian on Riemannian submersions with minimal leaves.
\end{abstract}

\maketitle


\section{Introduction}
Consider a second order operator $L = \sum_{i=1}^k X_i^2 +X_0$ of strong H\"ormander type defined on a compact manifold $M$, symmetric with respect to a smooth volume density~$\mu$. In other words, we assume that $X_1, \dots, X_k$ along with their iterated brackets span the entire tangent bundle, giving us that that $L$ is hypoelliptic with discrete spectrum~\cite{Hor67}, see also e.g. \cite{FePh83}. The first general result regarding the spectral gap of $L$ was given in \cite{Jer86} using coordinate expression of the vector fields $X_1, \dots, X_k$. Several results exist for the special case when $L$ is the sub-Laplacian of a CR-manifold, see e.g. \cite{Gr85,LiWa13,IPV14}. More general coordinate-free results exist in \cite[Section~2]{BaBo12}, \cite[Section~3.5]{BaKi14}, \cite{BaKi16} and \cite[Section~4.1]{GrTh16a}. A majority of the latter mentioned group of results concern generalizations of Lichnerowicz bound for the spectral gap \cite{Lic58}. More precisely, if $L$ is the Laplacian of a compact, $n$-dimensional Riemannian manifold with Ricci curvature bounded from below by $\rho > 0$, then the first non-zero eigenvalue $\lambda_1$ has the lower bound
$$|\lambda_1| \ge \frac{n}{n-1} \rho.$$

In this paper, we will present a further generalization of the Lichnerowicz spectral gap. In order to put the result in perspective, consider the following special case. Let $\pi: (M, g) \to (N, g_N)$ be a surjective Riemannian submersion, meaning that the orthogonal complement $H$ of the vertical bundle $V = \ker d\pi$ satisfies
\begin{equation} \label{RieSub} g| H = \pi^* g_N |H.\end{equation}
Consider the operator $L$ defined to be the horizontal lift with respect to $H$ of the Laplacian on $N$. Then $L$ is symmetric with respect to the volume density $\mu_g$ of $g$ if and only if the submanifolds $M_y = \pi^{-1}(y)$, $y \in N$ are minimal, see \cite[Section~2]{GrTh16a} for more details. If in addition these submanifolds are totally geodesic and $H$ satisfies the Yang-Mills condition, a spectral gap for $L$ is given in \cite{BaKi16}. This estimate is sharp for the cases of the Hopf fibration and the quaternionic Hopf fibration. We will generalize this result, removing the requirement of the Yang-Mills condition and only requiring that the leaves are minimal (Condition \eqref{item:C}, Section~\ref{sec:Lich}). Furthermore, we do not need that the complement $V$ of $H$ is an integrable subbundle, allowing for the result to be applied outside the cases of submersions or foliations.

The main result with proof is presented in Section~\ref{sec:Main}. In Section~\ref{sec:Examples} we give examples where the estimate on the spectral gap is computed explicitly. We emphasize the fact that the removal of the requirement of the Yang-Mills condition allows us to consider conformal changes of the metric $g_N$ in \eqref{RieSub}, see Section~\ref{sec:YM}.

\section{Statement of main result and proof of the spectral gap} \label{sec:Main}
\subsection{Sub-Laplacians and notation}
Let $M$ be a connected, compact manifold with a volume density $\mu$. Let $L = \sum_{i=1}^k X_i^2 +X_0$ be a smooth second order operator on $M$ that is symmetric with respect to $\mu$. Any such operator $L$ is uniquely determined by a symmetric positive semi-definite tensor $q_L \in \Gamma(\Sym^2 TM)$ on the cotangent bundle, given by
$$L(f_1 f_2) - f_2 L f_1 - f_1 L f_2  = 2 \langle df_1, df_2 \rangle_{q_L}.$$

\emph{A sub-Riemannian structure} on $M$ is a pair $(H, g_H)$ where $H$ is a subbundle of the tangent bundle $TM$ and $g_H$ is a fiber metric defined only on $H$. We will call $H$ \emph{the horizontal bundle}. Equivalently, a sub-Riemannian structure can be considered as a metric tensor $g_H^*$ on $T^*M$ that degenerates along a subbundle. These two points of view are related through the map
$$\begin{array}{ccc} \sharp^H:T^*M \to TM, & \qquad & \mathrm{Image} \, \sharp^H = H \vspace{0.2cm} \\ 
 \sharp^H \alpha = \langle \alpha, \, \cdot \, \rangle_{g_H^*}, & \qquad & \langle \alpha, \beta \rangle_{g_H^*} = \langle \sharp^H \alpha, \sharp^H \beta \rangle_{g_H} . \end{array}$$

Any operator $L$ where $q_L = g_H^*$ degenerates along a subbundle can be considered as \emph{the sub-Laplacian of $(M, H, g)$ with respect to $\mu$}, defined by
$$Lf = \Delta_{H,\mu} f := \dv_\mu \sharp^H df.$$
For the special case when $H = TM$, $\Delta_{TM, \mu}$ is called the Witten-Laplacian or just the Laplacian if $\mu$ is the Riemannian volume density.

In what follows, we assume that $H$ is \emph{bracket-generating}, meaning that the vector fields with values in $H$ and their iterated brackets span the entire tangent bundle. This is equivalent to assuming that $L = \Delta_{H,\mu}$ satisfies the strong H\"ormander condition. The operator $L$ is essentially self-adjoint on smooth functions \cite{Str86} and the main result will give a bound for the first non-zero eigenvalue of $\Delta_{H,\mu}$ using bounds on tensors. 

For the rest of the paper, we will use the following notation. The manifold $M$ will be connected, compact and of dimension $m+n$, where $n$ denotes the rank of the horizontal bundle $H$. For any 2-tensor $\xi \in T^*M^{\otimes 2}$, define $\tr_H \xi(\times, \times) = \xi(g_H^*)$. In other words, $\tr_H \xi(\times, \times)(x) = \sum_{i=1}^n \xi(v_i, v_i)$ where $v_1, \dots, v_n$ is an orthonormal basis of $H_x$.

For a given volume density $\mu$ we will write $\| f\|_{L^2}$ for the corresponding $L^2$ metric. If $q$ is a possibly degenerate metric tensor on a vector bundle $E \to M$, we write $| e|_q = \langle e, e \rangle_q^{1/2}$ for any $e \in E$, and $\| Z\|_{L^2(q)} := \| \, | Z |_q \, \|_{L^2}$ for any $Z \in \Gamma(E)$. Finally, we introduce the following operators on forms. If $T: \bigwedge^k TM \to TM$ is a vector-valued $k$-form, define an operator $\iota_T$ such that
$$\iota_T \alpha = \alpha(T(\, \cdot \,)) \quad \text{if $\alpha$ is a one-form},$$
and extend its definition to arbitrary forms by requiring that it satisfies $\iota_T (\alpha \wedge \beta) = (\iota_T \alpha) \wedge \beta + (-1)^{k + \deg \alpha} \alpha \wedge \iota_T \beta$. Note that $\iota_T$ becomes the usual contraction when $T$ is a vector field, i.e. $k = 0$.

\subsection{A Lichnerowicz estimate} \label{sec:Lich}
Let $( M,H,g_H )$ be a sub-Riemannian manifold. We introduce the following assumptions.
\begin{enumerate}[\rm (A)]
\item \label{item:A} There exists a complement $V$, i.e. a choice of subbundle satisfying $TM=H\oplus V$, such that $g_H$ and the corresponding projection $\pr_H: TM \to H$ satisfy
\begin{equation}\label{mp}
( \calL_Z\pr_{H}^*g_H )( X,X )=0, \qquad \text{for $Z\in \Gamma(V)$ and $X\in \Gamma(H)$,}
\end{equation}
with $\calL$ denoting the Lie derivative.
In this case $V$ is called \emph{a metric preserving complement} of $(H, g_H)$.
\item \label{item:B} With respect $V$, define \emph{the curvature} and \emph{the cocurvature} of $H$, by
$$\mathcal{R}( X,Y )=\pr_{V}\left[ \pr_{H}X,\pr_{H}Y \right] \quad  \text{ and }  \quad \bar{\mathcal{R}}( X,Y )=\pr_{H}\left[ \pr_{V}X,\pr_{V} Y\right],$$ respectively. We will assume that 
\begin{equation}\label{eq:13}
\tr\bar{\mathcal{R}}( X,\mathcal{R}( X, \, \cdot \, ) ) =0.
\end{equation}
This condition has appeared in \cite{GrTh16}, \cite{GrTh16a} and \cite{GrTh16b}. Note that if $V$ is integrable then $\bar{\mathcal{R}}\equiv 0$, hence \eqref{eq:13} is always satisfied.
\item \label{item:C} A Riemannian metric $g$ is said \emph{to tame} $g_H$ if $g|H = g_H$. Let $g$ be a taming metric for $g_H$ making $H$ and $V$ orthogonal. Define $g_{V}$ to be the restriction of $g$ to $V$. It will be assumed that 
\begin{equation}\label{eq:16}
   \tr_{V}( \calL_{X}g )( \times,\times )=0, \qquad \text{whenever $X\in \Gamma( H )$.}
\end{equation}
\end{enumerate}
Let $\nabla^V$ be any affine connection on $V$ compatible with $g_{V}$, and denote the Levi-Civita connection of $g$ by $\nabla^{g}$. Define the affine connection $\nabla$ on $TM$ by 
\begin{equation}\label{NablaDef}
\nabla_{X} Y= \pr_H \nabla^{g}_{\pr_{H}X}\pr_{H}Y+\pr_{H}\left[ \pr_{V}X,\pr_{H}Y \right]+\nabla^V_{X}\pr_{V}Y.
\end{equation}
Let $T = T^\nabla$ and $R^\nabla$ denote the torsion and curvature tensor of $\nabla$. In what follows we introduce the tensors playing a role in the estimate of the spectral gap.

Define the vector valued one-form $B: TM \to TM$ by
\begin{equation} \label{B} B(v) = \tr_H (\nabla_\times T)(\times, v) - \tr_H T(\times, T(\times, v)).\end{equation}
Furthermore, we introduce the endomorphisms $\Ric, \scrW : T^*M \to T^* M$ and the map $S:T^*M \to T^*M^{\otimes 2}$ by
\begin{eqnarray} \label{eq:Ric}
\Ric(\alpha)(v) & = & \tr_H R^\nabla(\times, v) \alpha(\times), \\
\langle \scrW(\alpha), \beta\rangle_{g^*} & = & \langle \iota_T \alpha, \iota_T \beta \rangle_{g_H^* \otimes g_V^*} - \langle \iota_B \alpha, \beta \rangle_{g_V^*},\\
\label{S} S(\alpha)(v,w) & =&  \langle w, T(\pr_H v, \sharp^H \alpha) \rangle_g 
\end{eqnarray}
for any $\alpha, \beta \in T^*M$ and $v,w \in TM$. We will use bounds on these tensors to obtain the estimate for the spectral gap.
\begin{theorem}\label{th:main}
Assume that $( M,H,g_H )$ is a compact sub-Riemannian manifold of rank $n$ with a taming metric $g$ such that the orthogonal complement $V$ is metric preserving, and assumptions \eqref{eq:13} and \eqref{eq:16} are satisfied. Let $\Delta_{H,\mu}$ be the sub-Laplacian with respect to the volume density $\mu$ of $g$. Denote by $\nabla$ any connection on the form of \eqref{NablaDef} satisfying
\begin{equation*}
\begin{aligned}
\langle \Ric( \alpha ),\alpha\rangle_{g_H^*} & \geq & \rho_{1} |\alpha|^{2}_{g_H^*}, \\
| \iota_T \alpha |_{g_H^{* \otimes 2}}^{2} & \geq & \rho_2 |\alpha|^2_{g_{V}^*},
\end{aligned} \qquad
\begin{aligned}
|S(\alpha) |^{2}_{g_H^*\otimes g_{V}^*} & \le & \kappa_1 |\alpha |^{2}_{g_H^*}, \\
|\langle \iota_B  \alpha,\alpha\rangle_{g_H^*} | & \le & 2\kappa_2 |\alpha |_{g_{V}^*} |\alpha|_{g_H^*}, \\
|\langle \scrW( \alpha),\alpha\rangle_{g^*}| &\le & \kappa_3 |\alpha|^{2}_{g_{V}^*},
\end{aligned}
\end{equation*} 
with $\rho_i > 0$ and $\kappa_j \geq 0$. Assume that
$$\rho_1 \rho_2 - 4 \kappa_2^2-3\kappa_1\kappa_3-8\kappa_2\sqrt{\kappa_1\kappa_3} > 0.$$
Then we have the following estimate on the spectral gap
$$-\lambda_{1}\ge \left(\sqrt{\frac{\rho_1 \rho_2 - 4\kappa_2^2  + \frac{n-1}{n} \rho_2 \kappa_3}{\frac{n-1}{n} \rho_2+ 3\kappa_1} + \left( \frac{4 \kappa_2 \sqrt{\kappa_1}}{\frac{n-1}{n} \rho_2 + 3 \kappa_1} \right)^2 } -\frac{4 \kappa_2 \sqrt{\kappa_1}}{\frac{n-1}{n}\rho_2  + 3\kappa_1} \right)^{2}-\kappa_3,$$
where $\lambda_1$ is the first non-zero eigenvalue of $\Delta_{H,\mu}$.
\end{theorem}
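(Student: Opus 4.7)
My plan follows the generalized $\Gamma_{2}$-calculus introduced by Bakry--Emery and adapted to the sub-Riemannian setting by Baudoin--Garofalo and by the second author and Thalmaier. Let $f$ be a smooth eigenfunction of $\Delta_{H,\mu}$ with eigenvalue $\lambda_{1}\ne 0$, and write $\Gamma_H(f):=|\sharp^H df|_{g_H}^{2}$ for its horizontal energy and $\Gamma_V(f):=|\sharp^V df|_{g_V}^{2}$ for its vertical energy, using the taming metric $g$. Assumption \eqref{item:A} guarantees that the connection $\nabla$ of \eqref{NablaDef} preserves the splitting $TM=H\oplus V$, while conditions \eqref{eq:13} and \eqref{eq:16} are designed so that $\Delta_{H,\mu}$ behaves symmetrically against the vertical gradient under integration by parts and so that an otherwise obstructing $\tr_H R^{\nabla}(X,\bar{\calR}(X,\,\cdot\,))$ term drops out.

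The first main step is to derive two Bochner-type identities with respect to $\nabla$. Writing $\Delta_{H,\mu}$ as $\tr_H\nabla^{2}+(\text{first-order correction})$ and using the standard commutation $\nabla^{2}_{X,Y}-\nabla^{2}_{Y,X}=R^{\nabla}(X,Y)-\nabla_{T(X,Y)}$ produces a horizontal identity of the schematic form
$$\tfrac{1}{2}\Delta_{H,\mu}\Gamma_H(f)-\langle d\Delta_{H,\mu}f,\sharp^H df\rangle_{g_H^{*}} = |\nabla^{2}_{HH}f|^{2}+\langle\Ric(df),df\rangle_{g_H^{*}}+2\langle S(df),\nabla^{2}f\rangle+\cdots,$$
and a vertical companion in which $\iota_T$, $\iota_B$, and $\scrW$ appear in exactly the combinations defined in \eqref{B}--\eqref{S}. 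The tensors $\Ric$, $S$, $B$, $\scrW$ are essentially read off from the curvature and torsion of $\nabla$ once \eqref{NablaDef} is plugged in; condition \eqref{eq:13} is what allows this bookkeeping to close.

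Next I integrate both identities against $\mu$ over the compact manifold $M$. The divergence terms on the left vanish, and after substituting $\Delta_{H,\mu}f=\lambda_{1}f$ the two left-hand sides collapse to $\lambda_{1}^{2}\|f\|_{L^{2}}^{2}$ and $\lambda_{1}\|\sharp^V df\|_{L^{2}}^{2}$ respectively. The dimensional Cauchy--Schwarz $|\nabla^{2}_{HH}f|^{2}\ge\tfrac{1}{n}(\Delta_{H,\mu}f)^{2}$ is what produces the familiar factor $\tfrac{n-1}{n}$ multiplying $\rho_{2}$, and Young's inequality applied with three free parameters $\delta_{1},\delta_{2},\delta_{3}>0$ absorbs the three coupling terms involving $S$, $B$, and $\scrW$, at the cost of constants proportional to $\sqrt{\kappa_{1}}$, $\kappa_{2}$, and $\sqrt{\kappa_{3}}$ respectively.

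Finally I combine the two integrated inequalities with a weight $\nu>0$ on the vertical one and optimize. The $S$-coupling is responsible for the $3\kappa_{1}$ in the denominator, the $B$-term yields the $4\kappa_{2}^{2}$, and the coupled $B$-with-$S$ cross term forces the $8\kappa_{2}\sqrt{\kappa_{1}\kappa_{3}}$ correction that appears in the hypothesis. After simplification the resulting inequality is quadratic in $\sqrt{-\lambda_{1}+\kappa_{3}}$; solving it yields the stated lower bound, which reduces to the classical Lichnerowicz factor $\tfrac{n}{n-1}\rho$ in the Riemannian limit $H=TM$ with all $\kappa_{j}=0$. The main obstacle, beyond the lengthy but essentially mechanical Bochner bookkeeping, is the simultaneous optimization over $\nu$ and the three Young parameters: each cross term contributes nonlinearly, and it is only the algebraic constraint $\rho_{1}\rho_{2}-4\kappa_{2}^{2}-3\kappa_{1}\kappa_{3}-8\kappa_{2}\sqrt{\kappa_{1}\kappa_{3}}>0$ that guarantees a positive solution of the quadratic.
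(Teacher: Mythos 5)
Your overall strategy --- Weitzenb\"ock-type identities for the adapted connection $\nabla$, tested against the horizontal and vertical parts of $df$, integrated over the compact manifold, improved by the dimensional Cauchy--Schwarz inequality, and finally solved as a quadratic in $\sqrt{\kappa_1(\kappa_3-\lambda)}$ --- is indeed the strategy of the paper (its Lemma~\ref{SymmetryAndBochner} and Lemma~\ref{lemma:Inequalities} play exactly the role of your two Bochner identities). However, there is a genuine gap at the point where you claim that ``Young's inequality with three free parameters'' plus a weight $\nu$ on the vertical identity absorbs the $S$-, $B$- and $\scrW$-couplings. The vertical identity alone reads $-\lambda\|df\|_{L^2(g_V^*)}^2=\|(D+\iota_T)df\|_{L^2(g_H^*\otimes g_V^*)}^2-\langle\scrW df,df\rangle_{L^2(g_V^*)}$; since $-\lambda>0$ this is consistent for arbitrarily large vertical energy, so nothing in your two identities bounds $\|df\|_{L^2(g_V^*)}$ in terms of $-\lambda$, and the penalty terms you want to absorb are all bilinear or quadratic in precisely this uncontrolled quantity. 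Moreover, in your scheme $\rho_2$ never actually enters: the hypothesis $|\iota_T\alpha|^2_{g_H^{*\otimes 2}}\ge\rho_2|\alpha|^2_{g_V^*}$ is the only route by which the vertical energy can be dominated, and the dimensional Cauchy--Schwarz step produces the term $\lambda^2/n$, not ``the factor $\tfrac{n-1}{n}$ multiplying $\rho_2$''; that factor appears only after the vertical energy has been eliminated.

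The missing ingredient is the exact integration-by-parts identity \eqref{CTver}, namely $\tfrac12\|\iota_T df\|^2_{L^2(g_H^{*\otimes2})}-\langle\iota_B df,df\rangle_{L^2(g_H^{*})}=\langle(D+\iota_T)df,S(df)\rangle_{L^2(g_H^*\otimes g_V^*)}$, which follows from $\langle Ddf,\iota_T df\rangle_{g_H^{*\otimes2}}=-\tfrac12|\iota_T df|^2_{g_H^{*\otimes2}}$ and the computation of $D^*\iota_T df$. This identity is used twice: first, combined with $\rho_2$, $\kappa_1$, $\kappa_2$ and the vertical identity, it yields the a priori bound $\rho_2\|df\|_{L^2(g_V^*)}\le 2\bigl(\sqrt{\kappa_1(\kappa_3-\lambda)}+2\kappa_2\bigr)\sqrt{-\lambda}$; second, it converts the $-\tfrac34\|\iota_T df\|^2$ term in the horizontal identity into the $-\tfrac32\,S$-coupling of Lemma~\ref{lemma:Inequalities}~(c). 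Only after substituting the vertical bound into that horizontal identity does one obtain the quadratic inequality in $s=\sqrt{\kappa_1(\kappa_3-\lambda)}$ whose positive root gives the stated constant, including the $4\kappa_2^2$ and $8\kappa_2\sqrt{\kappa_1\kappa_3}$ terms of the hypothesis. Without \eqref{CTver} (or an equivalent substitute) your optimization over $\nu,\delta_1,\delta_2,\delta_3$ cannot close at all, let alone reproduce the exact bound, so this step needs to be supplied before the rest of your outline goes through.
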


\begin{remark} \label{re:PostMain}
\begin{enumerate}[\rm (a)]
\item If $H =TM$, we can choose $\kappa_3 = 0$ and $\rho_2 = \infty$ to obtain the original Lichnerowicz estimate.
\item One can interpret a complement $V$ satisfying \eqref{mp} as a generalization of vertical bundles on sub-Riemannian manifolds coming from submersions. If $\pi: M \to N$ is a surjective submersion into a Riemannian manifold $(N, g_N)$, $V = \ker d\pi$ and $H$ satisfy $TM = H \oplus V$, then we can define a sub-Riemannian metric by $g_H = \pi^* g_N |H$. In this case, $V$ is a metric preserving complement and $\Ric$ is the pullback of the Ricci curvature on $N$, see~\cite[Prop~3.4]{GrTh16a}. More generally, if $V$ is an integrable and metric preserving complement, then the corresponding foliation of $V$ is a Riemannian foliation.
\item The assumption $\rho_2 > 0$ is equivalent to the assumption $H + [H,H] = TM$, i.e. that $\calR$ is surjective on $V$. To see this, observe that $T(v,w)= - \calR(v,w)$ whenever $v, w \in H$. Hence, $| \iota_T df |^2_{g_H^{*\otimes 2}} = | \iota_\calR df |^2_{g_H^{*\otimes 2}}$. It follows that the existence of a covector $\alpha$ such that $\iota_\mathcal{R}\alpha\equiv 0$ while satisfying $|\alpha |_{g_V^*} \neq 0$, implies $\rho_2 = 0$. Conversely, if $\calR$ is surjective, then $g_\calR^*(\alpha, \beta) := \langle \iota_\calR \alpha, \iota_\calR \beta \rangle_{g_H^{*\otimes 2}}$ is a cometric degenerate on $H$. Hence, it induces an inner product on $V$, and from compactness, we get that $| v |_{g_\calR}^2 \geq \rho_2 | v|_{g_V}^2$ for any $v \in V$ and some $\rho_2 >0$.
\item The above result is a generalization of a result found \cite{BaKi16}. The latter result was proved for the case when $V$ is an integrable subbundle, and the corresponding foliation of $V$ is \text{totally geodesic}, which is equivalent to the requirement
\begin{equation} \label{TGF} ( \calL_{X}g )(Z,Z )=0, \end{equation}
for any $X \in \Gamma(H)$ and $Z \in \Gamma(V)$. If this assumption holds, we can choose $\nabla$ as the Bott connection
\begin{align} \label{Bott}
\nabla_{X} Y & = \pr_H \nabla^{g}_{\pr_{H}X}\pr_{H}Y+\pr_{H}\left[ \pr_{V}X,\pr_{H}Y \right] \\ \nonumber
& \qquad+ \pr_V \nabla^{g}_{\pr_{V}X}\pr_{V}Y+\pr_{V}\left[ \pr_{H}X,\pr_{V}Y \right].
\end{align}
Under the additional assumptions of \emph{the Yang-Mills condition}, i.e. that \\$\tr_H (\nabla_\times T)(\times, \cdot ) = 0$, we get that $B =0$, $\scrW = 0$, and the result simplifies to
$$-\lambda_{1}\ge \frac{\rho_1 \rho_2  }{\frac{n-1}{n} \rho_2+ 3\kappa_1}  .$$
\item If the condition \eqref{TGF} holds, we can still use the connection \eqref{Bott}, even without the Yang-Mills assumption or even assuming $V$ integrable. From the fact that $T(H,V) = 0$ and assumption \eqref{eq:13}, we get $\scrW =0$ and hence the estimate
$$-\lambda_{1}\ge \left(\sqrt{\frac{\rho_1 \rho_2 - 4\kappa_2^2  }{\frac{n-1}{n} \rho_2+ 3\kappa_1} + \left( \frac{4 \kappa_2 \sqrt{\kappa_1}}{\frac{n-1}{n} \rho_2 + 3 \kappa_1} \right)^2 } -\frac{4 \kappa_2 \sqrt{\kappa_1}}{\frac{n-1}{n}\rho_2  + 3\kappa_1} \right)^{2}.$$
In this case, the result is non-trivial for $\rho_1 \rho_2 - 4\kappa_2^2 >0$. The latter assumption is analogous to positive Ricci curvature in the sense that if $g$ is a complete Riemannian metric and this inequality holds, then $M$ is compact \cite[Prop 5.3]{GrTh16b}.
\item Note that the result is invariant under scaling of the vertical part of the metric. Consider a variation of the metric $g_\ve = g_H \oplus \frac{1}{\ve} g_V$. If $g$ has volume density $\mu$, then clearly the density of $g_\ve$ is $\ve^{-m/2} \mu$. Hence, the sub-Laplacian with respect to $g$ and $g_\ve$ coincides. Applying this scaled metric to Theorem~\ref{th:main}, the parameters $\rho_1$, $\rho_2$, $\kappa_1$, $\kappa_2$ and $\kappa_3$ are respectively replaced by $\rho_1$, $\ve^{-1} \rho_2$, $\ve^{-1} \kappa_1$, $\ve^{-1/2} \kappa_2$ and $\kappa_3$, leaving us with the same result for the spectral gap.
\end{enumerate}
\end{remark}

\subsection{Properties of the connection}
The proof of the main theorem will be divided into several intermediate steps. We begin by proving some general properties of the connections used in this setting, given assumptions \eqref{item:A}, \eqref{item:B} and \eqref{item:C}.
\begin{lemma}
Let $( M,H,g_H )$ be a sub-Riemannian manifold and let $g$ be a taming metric such that the vertical bundle $V$ is metric preserving. If $\nabla$ is an affine connection defined as in \eqref{NablaDef}, then it satisfies
\begin{enumerate}[\rm (i)]
\item $\nabla g_H^* = 0$ and $\nabla g = 0$.
\end{enumerate}
Furthermore, we have that its torsion $T$ satisfies:
\begin{enumerate}[\rm (i)]
\setcounter{enumi}{1}
\item $\pr_H T(v, w) = 0$ whenever $v \in H$.
\item $T(v,w) = - \calR(v,w)$ whenever $v,w \in H$.
\item $\pr_H T(v,w) = - \bar{\calR}(v,w)$ whenever $v,w \in V$.
\item If $B$ is defined as in \eqref{B}, then it takes its values in $V$.
\item $\tr T(v, \, \cdot \,) = 0$ whenever $v \in H$.
\end{enumerate}
\end{lemma}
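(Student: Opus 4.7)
All six properties will follow by direct computation from the definition \eqref{NablaDef} combined with torsion-freeness and $g$-compatibility of $\nabla^g$ and the three structural hypotheses \eqref{item:A}--\eqref{item:C}. The one preliminary I will use repeatedly is that $\nabla$ preserves the decomposition $TM = H \oplus V$: the first two terms on the right of \eqref{NablaDef} lie in $H$ and the last lies in $V$, while purely horizontal $Y$ kills the last term and purely vertical $Y$ kills the first two. In particular $\pr_H$ and $\pr_V$ commute with $\nabla$.

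For (i), I will test $X g(Y,Z) = g(\nabla_X Y, Z) + g(Y, \nabla_X Z)$ with $Y,Z$ chosen among $\Gamma(H)$ and $\Gamma(V)$. The case $Y, Z \in \Gamma(V)$ reduces to compatibility of $\nabla^V$ with $g_V$; the mixed case is trivial because $\nabla$ preserves the splitting and $H \perp V$; the case $Y, Z \in \Gamma(H)$ splits via $X = \pr_H X + \pr_V X$, with the horizontal contribution handled by Levi-Civita and the vertical contribution handled by expanding
\[
(\calL_{\pr_V X} \pr_H^* g_H)(Y,Z) = (\pr_V X) g(Y,Z) - g(\pr_H[\pr_V X, Y], Z) - g(Y, \pr_H[\pr_V X, Z])
\]
and invoking \eqref{mp}. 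Compatibility $\nabla g_H^* = 0$ then follows since $\nabla$ preserves $H$ and $g_H = g|H$.

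For (ii)--(iv), I will compute $T(X,Y) = \nabla_X Y - \nabla_Y X - [X,Y]$ directly. In (ii), taking $X \in \Gamma(H)$ forces $\pr_V X = 0$, so the only $\pr_H$-valued contribution is exactly the torsion $\pr_H T^{\nabla^g}(X,\pr_H Y)$ of $\nabla^g$, which vanishes; the pieces involving $\pr_V Y$ contribute only vertical terms, and the two bracket cross-terms $\pr_H[\pr_V Y, X]$ and $\pr_H[X, \pr_V Y]$ cancel by antisymmetry. Case (iii) restricts this to $X,Y \in \Gamma(H)$, so the Levi-Civita contributions cancel leaving only $-\pr_V[X,Y] = -\calR(X,Y)$. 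Case (iv) is the mirror computation for $X, Y \in \Gamma(V)$, where the bracket term $-\pr_H[X,Y]$ is precisely $-\bar{\calR}(X,Y)$.

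For (v), pick a horizontal orthonormal frame $\{e_i\}$: since projections commute with $\nabla$, $\pr_H(\nabla_{e_i} T)(e_i,v) = \nabla_{e_i}\pr_H T(e_i,v) - \pr_H T(\nabla_{e_i} e_i, v) - \pr_H T(e_i, \nabla_{e_i} v)$, and each term vanishes by (ii) using $\nabla_{e_i} e_i \in H$; likewise $\pr_H T(e_i, T(e_i, v)) = 0$ by (ii), so $B(v) \in V$. For (vi), let $X \in \Gamma(H)$ and extend $\{f_a\}$ to a frame of $V$ that is orthonormal on a neighborhood; horizontal contributions to $\tr T(X, \cdot)$ vanish by (ii), and a short direct computation gives $T(X,f_a) = \nabla^V_X f_a - \pr_V[X,f_a]$, so the remaining trace is $\sum_a g_V(\nabla^V_X f_a - \pr_V[X,f_a], f_a)$. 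The first summand equals $\tfrac12 \sum_a X g_V(f_a,f_a) = 0$ by $\nabla^V g_V = 0$ and the choice of frame, while the second equals $-\tfrac12 \tr_V (\calL_X g)(\times,\times) = 0$ by \eqref{eq:16}. The only conceptual input is the use of \eqref{mp} in (i) and \eqref{eq:16} in (vi); the rest is bookkeeping with projections and bracket antisymmetry.
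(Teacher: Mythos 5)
Your proof is correct and takes essentially the same route as the paper: (i)--(v) follow by direct computation from \eqref{NablaDef} exactly as the paper asserts without detail, and for (vi) your reduction of $\sum_a \langle T(X,f_a), f_a\rangle_g$ to the vertical trace of $\calL_X g$, killed by \eqref{eq:16}, is the paper's computation (your derivation of (v) from (ii) alone is also fine). The only blemish is a harmless sign: with a vertical orthonormal frame one has $-\sum_a g([X,f_a],f_a) = +\tfrac{1}{2}\tr_V(\calL_X g)(\times,\times)$ rather than $-\tfrac{1}{2}$, which does not affect the conclusion since the trace vanishes anyway.
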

\begin{proof}
Properties (i) to (iv) follows from the definition of $\nabla$ and assumption \eqref{item:A}. The result (v) follows from (iii) and assumption \eqref{item:B}. For the proof of (vi), let $X$ be any section of $H$. Let $X_1,\dots,X_n$ and $Z_1,\dots,Z_m$ be local orthonormal bases of respectively $H$ and~$V$. Then we have that
\begin{align*}
& \tr T(X, \, \cdot \,) = \sum_{j=1}^n \langle T( X,X_j ), X_j\rangle_{g} + \sum_{k=1}^m \langle T( X,Z_k ),Z_k\rangle_{g} = \sum_{k=1}^m \langle T( X,Z_k ),Z_k\rangle_{g}\\
&=\sum_{k=1}^m \langle Z_k, \nabla_{X}Z_k-\nabla_{Z_k}X-\left[ X,Z_k \right]\rangle_{g} \\
& =- \sum_{k=1}^m \langle Z_k, [ X,Z_k ]\rangle_{g}=\frac{1}{2}\tr_{V}( \calL_{X}g )( \times,\times )=0,
\end{align*}
where the last equality follows from assumption \eqref{item:C}.
\end{proof}

\begin{lemma} \label{lemma:LambdaVertical}
Let $\Ric$ be defined as in equation \eqref{eq:Ric}
with respect to $\nabla$. Then 
$$\langle \Ric( \alpha),\beta\rangle_{g^*}=\langle \Ric( \alpha), \beta\rangle_{g_H^*}, \qquad \alpha, \beta \in T^*M.$$ Additionally, we have that $\Ric$ is symmetric.
\end{lemma}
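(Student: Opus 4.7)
\emph{Sketch of the plan.} The first identity is equivalent to $\Ric(\alpha)(U) = 0$ for every $U \in \Gamma(V)$, since the cometric splits orthogonally as $g^* = g_H^* + g_V^*$. Because $\nabla$ preserves $H$ (visible from \eqref{NablaDef}), so does $R^\nabla$, and hence $\sum_{i=1}^{n} R^\nabla(e_i, U) e_i$ lies in $H$ for any local horizontal orthonormal frame $e_1, \dots, e_n$. It therefore suffices to prove that this horizontal vector vanishes; I plan to do so by pairing against an arbitrary $W \in \Gamma(H)$.

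The main tool is the first Bianchi identity for a metric connection with torsion $T$, applied to $(e_i, U, W)$. Pairing with $e_i$ via $g$ and summing, the curvature side reduces, using the $g$-skewness of $R^\nabla$ (from $\nabla g = 0$) and the horizontality of $\sum_i R^\nabla(e_i, W) e_i$, to the single term $-g\bigl(W, \sum_i R^\nabla(e_i, U) e_i\bigr)$. On the torsion side, each of the three $(\nabla_\cdot T)(\cdot, \cdot)$-summands lies in $V$: whenever $T$ has one horizontal input the output is vertical by parts (ii) and (iii) of the previous lemma, and $\nabla$ preserves $V$. Two of the three $T(T(\cdot,\cdot),\cdot)$-summands are analogously vertical. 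The only genuinely nonzero contribution is $T(T(W, e_i), U)$, whose horizontal component is $\bar\calR(\calR(W, e_i), U)$ by part (iv) of the previous lemma; its trace against $e_i$ reduces, via antisymmetry of $\bar\calR$, to precisely the quantity annihilated by assumption \eqref{eq:13} (polarize $X = W + U$). This establishes horizontality.

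For the symmetry of $\Ric$, horizontality reduces the problem to showing $\sum_i g(u, R^\nabla(e_i, w) e_i) = \sum_i g(w, R^\nabla(e_i, u) e_i)$ for $u, w \in H$. An analogous Bianchi calculation with the triple $(e_i, w, u)$ handles this case cleanly: all three inputs are horizontal, $T(H, H) \subset V$, so every $(\nabla_\cdot T)$- and $T(T(\cdot,\cdot),\cdot)$-term on the torsion side lies in $V$ and dies against $e_i$; the Bianchi left-hand side thus vanishes and gives the desired symmetry. I expect the main obstacle to be recognising that the lone nonzero cross-term in step one is exactly what \eqref{eq:13} kills; the rest is projection-theoretic bookkeeping with the torsion properties from the previous lemma.
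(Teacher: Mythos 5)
Your proposal is correct and takes essentially the same route as the paper: trace the first Bianchi identity for the torsion connection over a horizontal orthonormal frame, use the structural properties of $T$ (vertical values on $H\times TM$, $\pr_H T|_{V\times V}=-\bar{\calR}$, $T|_{H\times H}=-\calR$, and $\nabla$ preserving the splitting) to kill all terms except the single one identified with $\tr\bar{\calR}(U,\calR(W,\cdot\,))$, which vanishes by assumption \eqref{eq:13}. The symmetry argument via a second, all-horizontal application of the Bianchi identity also matches the paper's proof.
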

\begin{proof}
Since $\nabla$ is compatible with $g$, we get $\langle R^{\nabla}( \, \cdot \, ,\, \cdot\, )v,v\rangle_{g}=0,$ for all $v \in TM$. Using the first Bianchi identity for affine connections with torsion, when $X,Y\in \Gamma( H )$ and $Z\in \Gamma( V )$,
\begin{align*}
&\langle R^\nabla( X,Z )Y + R^\nabla( Y,X )Z + R^\nabla( Z,Y )X,X \rangle_g= \langle R^\nabla( X,Z )Y,X \rangle_g\\
&= \langle - T( X,T( Z,Y ) )-T( Y,T( X,Z ) ) -T( Z,T( Y,X ) ),X\rangle_g\\
&\quad+\langle ( \nabla_{X}T )( Z,Y )+( \nabla_{Y}T )( X,Z )+( \nabla_{Z}T )( Y,X ),X \rangle_g = - \langle X, \bar{\calR}(Z, \calR(Y, X) \rangle_g.
\end{align*}
Hence 
\begin{align*}
& \left\langle \Ric( \alpha ),\beta\right\rangle_{g^*}
=\tr_{H}\left\langle R^{\nabla}( \times ,\sharp^{g}\beta) \sharp^{H}\alpha,\times\right\rangle_{g_H}\\
&=\tr_{H}\left\langle R^{\nabla}( \times ,\sharp^{H}\beta) \sharp^{H}\alpha,\times\right\rangle_{g_H} - \tr \bar{\calR}(\sharp^V \beta , \calR(\sharp^H \alpha, \, \cdot \, )) =\left\langle \Ric (\alpha),\beta\right\rangle_{g_H^{*}},
\end{align*}
by assumption \eqref{item:B}. Another application of the first Bianchi identity shows that $\langle R^\nabla(X,Y)Z,X\rangle_{g_H}=\langle R^\nabla( X,Z )Y,X\rangle_{g_H}$ when $X,Y,Z\in \Gamma( H )$, giving us that $\Ric$ is symmetric.
\end{proof}

\subsection{Necessary identities}
Fix a Riemannian metric $g$ taming $g_H$. For any connection $\nabla$, define the operators on tensors by $D[X,Y] = \flat X \otimes \nabla_Y$ and $L[X,Y] = \nabla^2_{X,Y} := \nabla_X \nabla_Y -\nabla_{\nabla_X Y}$ for two vector fields $X$ and $Y$. Extend these operators to arbitrary sections of $TM^{\otimes 2}$ by $C^\infty(M)$-linearity, and define $L= L[g_H^*]$ and $D = D[g_H^*]$. The following results are found in \cite{GrTh16}.
\begin{lemma}\label{SymmetryAndBochner}
Assume that $\nabla$ is compatible with $g$ and $g_H^*$.
\begin{enumerate}[\rm (a)]
\item We have $L = - D^* D$ on tensors if and only if
\begin{equation} \label{SymmetryCondition} \tr T(v, \, \cdot \, ) = 0, \qquad \text{for any $v \in H$.} \end{equation}
Here the dual is with respect to the $L^2$-inner product on tensors induced by~$g$. Also, in this case $D^* = - \tr_H \iota_{\times} \nabla_{\times}$. In particular, this is true for the connection $\nabla$ in \eqref{NablaDef} given assumption \eqref{item:C}.
\item If $\nabla$ satisfies \eqref{SymmetryCondition}, then for any $f \in C^\infty(M)$, we have $Lf = \Delta_{H,\mu}f$ with $\mu$ being the volume density of $g$, and furthermore
\begin{equation} \label{Weitz} Ldf-dLf= - 2\scrD df +\scrA( df),\end{equation}
\begin{eqnarray*} \label{scrA}
\scrA(\alpha) & := & \Ric( \alpha )+ \iota_{2D^*T + B}\alpha , \\ \nonumber
\scrD \alpha & := & \tr_{H} \nabla_\times \alpha(T(\times, \, \cdot \,)).
\end{eqnarray*}
\end{enumerate}
\end{lemma}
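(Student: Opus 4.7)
The plan is to reduce both parts to integration-by-parts on tensors, using two consequences of $\nabla$ being compatible with $g$ and with $g_H^*$: first, that $\nabla$ preserves the splitting $TM = H \oplus V$ (parallelism of $\sharp^H$ gives $\nabla H\subseteq H$, and $\nabla g=0$ then forces $\nabla V\subseteq V$); and second, the general identity $\tr\nabla X = \dv_\mu X - \tr T(X,\cdot)$ valid for any vector field $X$, obtained by comparing $\nabla$ with the Levi-Civita connection $\nabla^g$ in an orthonormal frame (the contorsion is skew in its last two $g$-arguments).

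\emph{Part (a).} Pick a local horizontal orthonormal frame $\{X_i\}$. Expanding $\langle D\xi,\eta\rangle = \sum_i g(\nabla_{X_i}\xi,\iota_{X_i}\eta)$ by the Leibniz rule using $\nabla g=0$, then integrating over $M$ and applying Stokes' theorem to the horizontal vector field $W = \sum_i g(\xi,\iota_{X_i}\eta)\,X_i$, one obtains
\[
\langle D\xi,\eta\rangle_{L^2} = -\langle \xi,\tr_H\iota_\times\nabla_\times\eta\rangle_{L^2} - \int_M\langle\xi,\iota_U\eta\rangle\,d\mu,
\]
where $U := \sum_i\bigl((\dv_\mu X_i)X_i + \nabla_{X_i}X_i\bigr)$. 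A direct computation using the trace identity above, together with $\nabla_Z Y \in H$ for $Y \in H$, $Z \in V$ (so that $\tr_V\nabla\sharp^H\alpha = 0$), shows $\alpha(U) = \tr T(\sharp^H\alpha,\cdot)$ for every $\alpha \in T^*M$. Hence $U \equiv 0$ precisely when \eqref{SymmetryCondition} holds, giving $D^* = -\tr_H\iota_\times\nabla_\times$ and $L = -D^*D$ exactly under this condition. For the connection \eqref{NablaDef}, the condition is part (vi) of the first lemma above.

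\emph{Part (b).} The identification $Lf = \Delta_{H,\mu}f$ follows at once from (a): $Df = \flat\sharp^H df$ gives $Lf = -D^*Df = \tr_H\nabla\sharp^H df = \tr\nabla\sharp^H df = \dv_\mu\sharp^H df$, using in turn the formula for $D^*$, that $\nabla$ preserves $H$ (so $\tr_V\nabla\sharp^H df = 0$), and the trace identity combined with \eqref{SymmetryCondition}. For the Weitzenböck identity \eqref{Weitz}, expand $L\,df - d\,Lf = \tr_H \nabla^2_{\times,\times} df - d\,\tr_H \nabla^2_{\times,\times} f$ and commute $d$ past each $\nabla_{X_i}$ twice via the scalar identity $d\nabla_X h - \nabla_X dh = \iota_{\nabla_\cdot X + T(X,\cdot)}dh$ (itself a consequence of $[X,Y] = \nabla_X Y - \nabla_Y X - T(X,Y)$). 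After the horizontal trace and an appeal to Lemma~\ref{lemma:LambdaVertical}, the terms organise into three groups: a curvature piece $\tr_H R^\nabla(\times,\cdot)\sharp^H df(\times)$ which equals $\Ric(df)$ by \eqref{eq:Ric}; a first-order torsion piece collecting $\tr_H(\nabla_\times T)$ with $\tr_H T(\times,T(\times,\cdot))$ into $\iota_{2D^*T+B}df$ via \eqref{B} and the adjoint from (a); and the quadratic piece $-2\tr_H\nabla_\times df(T(\times,\cdot)) = -2\scrD df$ arising when the derivative lands on $df$ through a torsion factor.

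The main obstacle is the Weitzenböck bookkeeping in (b). Each swap of $d$ past $\nabla_{X_i}$ in a connection with nontrivial torsion produces several algebraically similar terms in $T$ and $\nabla T$, and sorting them into precisely $\Ric + \iota_{2D^*T+B} - 2\scrD$ demands both the first Bianchi identity for connections with torsion and the horizontal-trace vanishing $\tr T(v,\cdot) = 0$ for $v \in H$. Recognising $B$---the combination $\tr_H(\nabla_\times T)(\times,\cdot) - \tr_H T(\times,T(\times,\cdot))$ from \eqref{B}---as the right repackaging of two otherwise-disparate torsion contributions is the step requiring most care.
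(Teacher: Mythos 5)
Your argument is necessarily a different route from the paper's, because the paper does not prove this lemma at all: it is quoted from \cite{GrTh16}. Judged as a self-contained proof, your proposal is essentially correct. In (a), the defect vector $U=\sum_i\bigl((\dv_\mu X_i)X_i+\nabla_{X_i}X_i\bigr)$ is indeed frame-independent and horizontal (compatibility with $g_H^*$ forces $\nabla H\subseteq H$, and compatibility with $g$ then forces $\nabla V\subseteq V$), and your identity $\alpha(U)=\tr T(\sharp^H\alpha,\,\cdot\,)$ checks out via the trace formula $\tr\nabla X=\dv_\mu X-\tr T(X,\cdot)$. To actually close the ``if and only if'' you should add the one-line frame computation $\tr_H\iota_\times\nabla_\times D=L$ (valid whenever $\nabla$ preserves $H$ and $g$), which converts your integration-by-parts identity into $-D^*D=L+\nabla_U$; then $L=-D^*D$ on all tensors iff $U\equiv0$ iff \eqref{SymmetryCondition}, and $D^*=-\tr_H\iota_\times\nabla_\times$ in that case. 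Your derivation of $Lf=\Delta_{H,\mu}f$ in (b) is correct. The Weitzenb\"ock identity is only sketched, but the sketch is the right one and the bookkeeping does come out as you claim: writing $(Ldf-dLf)(Y)=\sum_i\bigl(\nabla^3f(X_i,X_i,Y)-\nabla^3f(Y,X_i,X_i)\bigr)$ and performing two swaps --- the differentiated Hessian antisymmetry $\nabla^2f(X,Y)-\nabla^2f(Y,X)=-df(T(X,Y))$ for the inner pair, and the Ricci identity $\nabla^2_{X,Y}df-\nabla^2_{Y,X}df=-df(R^\nabla(X,Y)\,\cdot\,)-\nabla_{T(X,Y)}df$ for the outer pair --- produces exactly $\Ric(df)-2\scrD df+\iota_{2D^*T+B}df$, using $2D^*T+B=-\tr_H(\nabla_\times T)(\times,\cdot)-\tr_H T(\times,T(\times,\cdot))$. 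What your route buys is an explicit proof inside the paper; what the citation buys is brevity and the general tensor statement of \cite{GrTh16}.

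Two corrections to your commentary on (b). First, neither the first Bianchi identity nor the trace condition $\tr T(v,\cdot)=0$ is actually needed for the identity \eqref{Weitz} itself; the only structural input is $\nabla g_H^*=0$, which lets you write $dLf(Y)=\langle\nabla_Y\nabla^2f,g_H^*\rangle$, after which the two commutations above suffice. Second, the ``appeal to Lemma~\ref{lemma:LambdaVertical}'' should be deleted: that lemma rests on assumption \eqref{item:B}, which is not among the hypotheses of the present statement, and it plays no role in the Weitzenb\"ock computation (it is only used later, in the proof of the spectral gap, to replace $\langle\Ric(\alpha),\beta\rangle_{g^*}$ by $\langle\Ric(\alpha),\beta\rangle_{g_H^*}$). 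With these adjustments, and with the commutation computation written out, your outline becomes a complete proof.
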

Notice the following identity $\scrD \alpha = - D^* \iota_T \alpha + \iota_{D^* T} \alpha$, while $\pr_H^* \scrD\alpha = S^* D \alpha$. We will use the Weitzenb\"och-type formula \eqref{Weitz} to find the spectral gap. However, we first need the following equalities.
\begin{lemma}\label{lemma:Inequalities}
Let $f \in C^\infty(M)$ be an arbitrary function.
\begin{enumerate}[\rm (a)]
\item We have the relation
$$\left\langle \iota_T df, D df \right\rangle_{g_H^{*\otimes 2}}= - \frac{1}{2}| \iota_T df|^{2}_{g_H^{*\otimes 2}}.$$
Hence, in particular, $| (D + \iota_T)df|_{L^2(g_H^{*\otimes 2})}^2 = | Ddf|_{L^2(g_H^{*\otimes 2})}^2.$ Furthermore,
\begin{equation} \label{CTver}
\frac{1}{2} \| \iota_T df \|^2_{L^2(g_H^{*\otimes 2})} - \langle \iota_B df, df \rangle_{L^2(g_H^{*\otimes 2})} 
= \langle (D + \iota_T) df, S(df) \rangle_{L^2(g_H^* \otimes g_V^*)}.
\end{equation}
\item We have the relation
$$ - \langle dLf, df \rangle_{L^2{(g_V^*})} = \| (D + \iota_T) df \|_{L^2(g_H^*\otimes g_V^*)}^2  - \langle \scrW df, df \rangle_{L^2(g_V^*)}.$$
\item Let $(Ddf)^s$ denote the symmetrized tensor $2(Ddf)^s(v,w) = \nabla_vdf (w) + \nabla_w df (v)$. Then
\begin{align*}
 \| L f\|_{L^2}^2  &= \| (D df)^s \|_{L^2(g_H^{*\otimes 2})}^2  - \frac{1}{2} \langle \iota_B (df), df \rangle_{L^2(g_H^*)}   \\
& \qquad + \langle \Ric(df), df \rangle_{L^2(g_H^*)} - \frac{3}{2} \langle (D + \iota_T) df, S(df) \rangle_{L^2(g_H^* \otimes g_V^*)}.
\end{align*}

\end{enumerate}
\end{lemma}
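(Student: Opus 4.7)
My plan is to derive all three identities from the Weitzenb\"ock formula $Ldf-dLf=-2\scrD df+\scrA(df)$ of Lemma \ref{SymmetryAndBochner}(b), the identity $L=-D^*D$ on tensors with $D^*=-\tr_H\iota_\times\nabla_\times$, and the basic torsion identity $(\nabla_v df)(w)-(\nabla_w df)(v)=-df(T(v,w))$, valid because $df$ is exact. I would carry out all local calculations in a $g$-orthonormal frame $X_1,\ldots,X_n$ of $H$ and $Z_1,\ldots,Z_m$ of $V$.

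For part (a), the pointwise equality $\langle\iota_T df,Ddf\rangle_{g_H^{*\otimes 2}}=-\tfrac12|\iota_T df|^2_{g_H^{*\otimes 2}}$ is purely algebraic: expand the left-hand side as $\sum_{i,j}df(T(X_i,X_j))(\nabla_{X_i}df)(X_j)$, antisymmetrize over $(i,j)$ using the skew-symmetry of $T$, and substitute the torsion identity. The second equality then falls out immediately from expanding $|(D+\iota_T)df|^2$. For the integrated identity \eqref{CTver}, I would start from the relations $\scrD\alpha=-D^*\iota_T\alpha+\iota_{D^*T}\alpha$ and $\pr_H^*\scrD\alpha=S^*D\alpha$ stated just before the lemma, pair them with $df$, and integrate by parts through $D^*$; the definition $B(v)=\tr_H(\nabla_\times T)(\times,v)-\tr_H T(\times,T(\times,v))$ then identifies the $\iota_B$ and $\iota_T$ contributions on the right-hand side with the correct coefficients.

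For part (b), pair the Weitzenb\"ock formula with $df$ in the $g_V^*$ inner product and integrate. Since $\Ric$ lands in $H^*$ by Lemma \ref{lemma:LambdaVertical}, its contribution to $\langle\scrA df,df\rangle_{L^2(g_V^*)}$ vanishes, leaving only the $\iota_{2D^*T+B}$ piece. For $\langle Ldf,df\rangle_{L^2(g_V^*)}=-\langle D^*Ddf,df\rangle_{L^2(g_V^*)}$, the crucial pointwise rewriting is that the torsion identity yields $(Ddf+\iota_T df)(X_i,Z_k)=(\nabla_{Z_k}df)(X_i)$, so integration by parts absorbs the horizontal-vertical part of $\|Ddf\|^2$ together with the $\iota_T$-corrections into $\|(D+\iota_T)df\|^2_{L^2(g_H^*\otimes g_V^*)}$. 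The leftover $\iota_T$- and $\iota_B$-residues reassemble into $-\langle\scrW df,df\rangle_{L^2(g_V^*)}$ directly from the definition of $\scrW$.

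For part (c), use symmetry of the sub-Laplacian to write $\|Lf\|^2_{L^2}=-\langle dLf,df\rangle_{L^2(g_H^*)}$, then apply Weitzenb\"ock paired against $g_H^*$. Integration by parts on $-\langle Ldf,df\rangle_{L^2(g_H^*)}$ yields $\|Ddf\|^2_{L^2(g_H^{*\otimes 2})}$, which the antisymmetric decomposition $(Ddf)^a(X_i,X_j)=-\tfrac12 df(T(X_i,X_j))$ from part (a) splits as $\|(Ddf)^s\|^2_{L^2(g_H^{*\otimes 2})}+\tfrac14\|\iota_T df\|^2_{L^2(g_H^{*\otimes 2})}$. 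The $\langle\scrA df,df\rangle_{L^2(g_H^*)}$ piece delivers $\langle\Ric df,df\rangle$ and $\langle\iota_B df,df\rangle$ cleanly, while $-2\langle\scrD df,df\rangle_{L^2(g_H^*)}$ contributes an $S$-pairing via $\pr_H^*\scrD=S^*D$; the spurious $\tfrac14\|\iota_T df\|^2$ and the $\iota_{D^*T}$ residue are then consolidated using the integrated identity \eqref{CTver} from part (a). The main obstacle is matching the $-\tfrac32$ coefficient on $\langle(D+\iota_T)df,S(df)\rangle$: it arises as $-1$ from the direct conversion of $\scrD$ plus $-\tfrac12$ from the \eqref{CTver}-substitution, and getting it right requires scrupulously tracking which of $g^*,g_H^*,g_V^*$ is in play at each step.
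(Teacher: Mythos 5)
Your proposal is correct and follows essentially the same route as the paper: in (a) antisymmetrization plus the torsion identity followed by integration by parts through $D^*$ using $\scrD=-D^*\iota_T+\iota_{D^*T}$, $\pr_H^*\scrD = S^*D$ and the definition of $B$; in (b) pairing the Weitzenb\"ock formula with the vertical part of $df$ and completing the square to recognize $\scrW$; in (c) pairing it horizontally, splitting $Ddf$ into symmetric and antisymmetric parts, and consolidating with \eqref{CTver}. The only slip is your bookkeeping aside in (c): the coefficient $-\tfrac32$ comes out as $-2$ from converting $-2\scrD$ via $S^*D$ together with the $\iota_{D^*T}$ residue (yielding $-2\langle(D+\iota_T)df,S(df)\rangle$) plus $+\tfrac12$ returned by the \eqref{CTver} substitution of $\tfrac14\|\iota_T df\|^2$, not as $-1-\tfrac12$, but this does not affect the validity of your argument.
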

\begin{proof}
Observe that since $\nabla$ preserves $H$, we have that $D\pr_H^* \alpha= (\pr_H^* \otimes \pr_H^*) D\alpha$. Hence, for any one-form $\alpha$ and two-tensor $\xi$
$$\langle \alpha, D^*\xi \rangle_{L^2(g_H^*)} = \langle \pr_H^* \alpha, D^*\xi \rangle_{L^2(g^*)} = \langle D\alpha, \xi \rangle_{L^2(g_H^{* \otimes 2})}. $$
Similarly, $\langle \alpha, D^* \xi \rangle_{L^2(g_V^*)} = \langle D \alpha, \xi \rangle_{L^2(g_H^* \otimes g_V^*)}$.
\begin{enumerate}[\rm (a)]
\item If $X_1, \dots, X_n$ is a local orthonormal basis of $H$, then
\begin{align*} & \langle D df, \iota_T df \rangle_{g_H^{*\otimes 2}} = \sum_{r,s=1}^n df(T(X_r, X_s))\nabla^2_{X_r,X_s} f \\
& = \frac{1}{2} \sum_{r,s=1}^n df(T(X_r, X_s)) ( \nabla^2_{X_r, X_s}f - \nabla_{X_s, X_r}^2 f) \\
&  = - \frac{1}{2} \sum_{r,s=1}^n df(T(X_r, X_s))^2 = - \frac{1}{2}| \iota_T df|_{g_H^{*\otimes 2}}^2. \end{align*}
We can use this identity to find
\begin{align*} 
&\frac{1}{2} \| \iota_T df \|^2_{L^2(g_H^{*\otimes 2})} = - \langle D^* \iota_T df, df \rangle_{L^2(g_H^{*})} \\
& = - \langle \iota_{D^* T} df, df \rangle_{L^2(g_H^{*})} + \langle D df, S(df) \rangle_{L^2(g_H^* \otimes g_V^*)} \\
&=  \langle \iota_B df, df \rangle_{L^2(g_H^{*})} + \langle (D + \iota_T) df, S(df) \rangle_{L^2(g_H^* \otimes g_V^*)} .
\end{align*}

\item If we evaluate \eqref{Weitz} with $\sharp^V df$, we obtain
\begin{align*}
& \langle L df, df \rangle_{L^2(g_V^*)} - \langle dL f , df \rangle_{L^2(g_V^*)} = - \| D df\|_{L^2(g_H^* \otimes g_V^*)}^2 - \langle dL f , df \rangle_{L^2(g_V^*)}\\
& = 2 \langle \iota_T df, D df \rangle_{L^2(g_H^* \otimes g_V^*)} + \langle \iota_B df, df \rangle_{L^2(g_V^*)}.
\end{align*}
The result now follows from completing the square.
\item By evaluating \eqref{Weitz} with $\sharp^H df$ and doing similar computations as in (b), we obtain
\begin{align*}
& \| L f \|_{L^2}^2 =  - \langle dL f , df \rangle_{L^2(g_H^*)}  \\
& = \| (D+ \iota_T) df\|_{L^2(g_H^{*\otimes 2})}^2 -  \|\iota_T df \|_{L^2(g_H^{*\otimes 2})}^2 + \langle (\Ric +\iota_B)(df), df \rangle_{L^2(g_V^*)}.
\end{align*}
From (a), we know that $ \| (D+ \iota_T) df\|_{L^2(g_H^{*\otimes 2})}^2 =  \| D df\|_{L^2(g_H^{*\otimes 2})}^2$ and furthermore,
\begin{align*}
&| D df|_{g_H^{*\otimes 2}}^2 = | (D df)^s|_{g_H^{*\otimes 2}}^2 + \frac{1}{4} \sum_{r,s=1}^n (\nabla_{X_r,X_s}^2 f -\nabla_{X_s,X_r}^2 f)^2 \\
&=  | (D df)^s|_{g_H^{*\otimes 2}}^2 + \frac{1}{4} \sum_{r,s=1}^n (df T(X_r, X_s))^2 = | (D df)^s|_{g_H^{*\otimes 2}}^2 + \frac{1}{4} | \iota_T df |_{g_H^{* \otimes 2}}^2.
 \end{align*}
Applying \eqref{CTver} to
$$\| L f \|_{L^2}^2 = \| (D df)^s\|_{L^2(g_H^{*\otimes 2})}^2 -  \frac{3}{4}\|\iota_T df \|_{L^2(g_H^{*\otimes 2})}^2 + \langle (\Ric + \iota_B )(df), df \rangle_{L^2(g_V^*)}$$
we get the result.
\end{enumerate}
\end{proof}
\subsection{Proof of Theorem~\ref{th:main}}
Introduce the eigenfunction $f$ with eigenvalue $\lambda < 0$. We normalize $f$ such that $\| f \|_{L^2} = 1$ and hence $\| df \|_{L^2(g_H^*)}^2 = - \lambda$. Assume the bounds of Theorem~\ref{th:main}.
\begin{lemma} We will have the following bounds involving the vertical part of the gradient;
\begin{eqnarray*}
\rho_2 \| df \|_{L^2(g_V^*)} &\leq & 2\left(\sqrt{\kappa_1(\kappa_3 - \lambda)} + 2\kappa_2 \right) \sqrt{-\lambda} \\
| \langle (D + \iota_T) df, S(df) \rangle_{L^2(g_H^* \otimes g_V^*)} |& \leq & - \frac{2 \lambda }{\rho_2} \sqrt{\kappa_1(\kappa_3 - \lambda)} \left(\sqrt{\kappa_1(\kappa_3 - \lambda)} + 2\kappa_2 \right)  .
\end{eqnarray*}
\end{lemma}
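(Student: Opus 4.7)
The plan is to chain Lemma~\ref{lemma:Inequalities}(a) and (b) together with the eigenvalue equation $Lf = \lambda f$ and the five tensor bounds from Theorem~\ref{th:main}, using only Cauchy--Schwarz to pass between $L^2$ norms.

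First, I would obtain an upper bound on $\|(D + \iota_T)df\|_{L^2(g_H^* \otimes g_V^*)}^2$. Substituting $dLf = \lambda\, df$ into Lemma~\ref{lemma:Inequalities}(b) gives
\[
\|(D + \iota_T)df\|_{L^2(g_H^* \otimes g_V^*)}^2 = \langle \scrW df, df\rangle_{L^2(g_V^*)} - \lambda \|df\|_{L^2(g_V^*)}^2,
\]
and the bound $|\langle \scrW \alpha, \alpha\rangle_{g^*}| \le \kappa_3 |\alpha|_{g_V^*}^2$ then gives
\[
\|(D + \iota_T)df\|_{L^2(g_H^* \otimes g_V^*)}^2 \le (\kappa_3 - \lambda)\|df\|_{L^2(g_V^*)}^2.
\]
Cauchy--Schwarz combined with $|S(\alpha)|^2 \le \kappa_1 |\alpha|_{g_H^*}^2$ and $\|df\|_{L^2(g_H^*)}^2 = -\lambda$ now yields the preliminary estimate
\[
|\langle (D + \iota_T)df, S(df)\rangle_{L^2(g_H^*\otimes g_V^*)}| \le \sqrt{\kappa_1(\kappa_3 - \lambda)}\,\sqrt{-\lambda}\,\|df\|_{L^2(g_V^*)}.
\]

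Next, I would use identity \eqref{CTver} from Lemma~\ref{lemma:Inequalities}(a), rewriting it as
\[
\tfrac{1}{2}\|\iota_T df\|_{L^2(g_H^{*\otimes 2})}^2 = \langle \iota_B df, df\rangle_{L^2} + \langle (D+\iota_T)df, S(df)\rangle_{L^2(g_H^*\otimes g_V^*)}.
\]
The lower bound $|\iota_T \alpha|_{g_H^{*\otimes 2}}^2 \ge \rho_2 |\alpha|_{g_V^*}^2$ controls the left-hand side from below by $\tfrac{\rho_2}{2}\|df\|_{L^2(g_V^*)}^2$, while $|\langle \iota_B \alpha, \alpha\rangle_{g_H^*}| \le 2\kappa_2 |\alpha|_{g_V^*}|\alpha|_{g_H^*}$ together with Cauchy--Schwarz bounds the first right-hand term by $2\kappa_2 \sqrt{-\lambda}\,\|df\|_{L^2(g_V^*)}$. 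Combining these with the preliminary estimate gives
\[
\tfrac{\rho_2}{2}\|df\|_{L^2(g_V^*)}^2 \le \bigl(2\kappa_2 + \sqrt{\kappa_1(\kappa_3 - \lambda)}\bigr)\sqrt{-\lambda}\,\|df\|_{L^2(g_V^*)},
\]
from which the first asserted inequality follows after dividing by $\|df\|_{L^2(g_V^*)}$ (the case $df|_V \equiv 0$ being trivial) and multiplying by $2$.

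The second inequality is then immediate: substitute the first bound on $\|df\|_{L^2(g_V^*)}$ into the preliminary Cauchy--Schwarz estimate for the $\langle (D+\iota_T)df, S(df)\rangle$ term. The only real work is keeping the three different (possibly degenerate) $L^2$ pairings $g_H^*$, $g_V^*$, $g_H^*\otimes g_V^*$ straight while applying Cauchy--Schwarz, and in particular noticing that the $S$ term appears on both sides of the argument — once as the quantity to be bounded in the second inequality, once as an input to \eqref{CTver} producing the first. That circular use is the only subtle point; everything else is a direct substitution of the tensor bounds stated in Theorem~\ref{th:main}.
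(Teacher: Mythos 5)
Your proposal is correct and follows essentially the same route as the paper: substitute $dLf=\lambda\,df$ into Lemma~\ref{lemma:Inequalities}(b) to get $\|(D+\iota_T)df\|^2_{L^2(g_H^*\otimes g_V^*)}\le(\kappa_3-\lambda)\|df\|^2_{L^2(g_V^*)}$, then sandwich identity \eqref{CTver} between the $\rho_2,\kappa_2$ lower bound and the Cauchy--Schwarz/$\kappa_1$ upper bound, and feed the resulting bound on $\|df\|_{L^2(g_V^*)}$ back into that same Cauchy--Schwarz estimate for the second inequality. The ``circular'' use of the $S$ term you flag is exactly how the paper argues and is unproblematic, since the preliminary estimate does not depend on the first inequality.
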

\begin{proof}
Using Lemma~\ref{lemma:Inequalities}~(b), we obtain the equality
$$\langle \scrW df, df \rangle_{L^2(g_V^*)} - \lambda \|  df \|_{L^2{(g_V^*})}^2 = \| (D + \iota_T) df \|_{L^2(g_H^*\otimes g_V^*)}^2 ,$$
which implies that $\| (D+ \iota_T)df\|_{L^2(g_H^*\otimes g_V^*)}^2 \leq (\kappa_3 - \lambda) \| df \|_{L^2( g_V^*)}^2.$ The bounds then follow from the estimate
\begin{align*} 
& \frac{\rho_2}{2} \| df \|_{L^2(g_V^*)}^2 -2\kappa_2 \| df \|_{L^2(g_V^*)}  \| df \|_{L^2(g_H^*)}  \leq \frac{1}{2} \| \iota_T df \|^2_{L^2(g_H^{*\otimes 2})} - \langle \iota_B df, df \rangle_{L^2(g_H^*)} \\
& =  \langle (D + \iota_T) df, S(df) \rangle_{L^2(g_H^* \otimes g_V^*)} \leq \sqrt{\kappa_1(\kappa_3 - \lambda)}   \| df\|_{L^2(g_H^*)} \| df\|_{L^2(g_V^*)} .
\end{align*}
\end{proof}

We are now ready to complete the proof of Theorem~\ref{th:main}. Observe from Lemma~\ref{lemma:Inequalities}~(c) that
\begin{align*}
& \lambda^2 = \| L f\|_{L^2}^2    = \| (D df)^s \|_{L^2(g_H^{*\otimes 2})}^2  - \frac{1}{2} \langle \iota_B df, df \rangle_{L^2(g_H^*)} \\
& \qquad \qquad\qquad  + \langle \Ric(df), df \rangle_{L^2(g_H^*)} - \frac{3}{2} \langle (D + \iota_T) df, S(df) \rangle_{L^2(g_H^* \otimes g_V^*)} \\
& \geq \frac{1}{n} \|Lf \|_{L^2}^2 - \kappa_2 \| df \|_{L^2(g_V^*)} \| df \|_{L^2(g_H^*)}  + \rho_1 \| df \|_{L^2(g_H^*)}^2 \\
& \qquad \qquad \qquad \qquad + \frac{3}{2} \frac{2 \lambda }{\rho_2} \sqrt{\kappa_1(\kappa_3 - \lambda)} \left(\sqrt{\kappa_1(\kappa_3 - \lambda)} + 2\kappa_2 \right) \\
& \geq \frac{\lambda^2}{n} + \frac{2\kappa_2 \lambda}{\rho_2}  \left(\sqrt{\kappa_1(\kappa_3 - \lambda)} + 2\kappa_2 \right)   - \rho_1 \lambda \\
& \qquad \qquad \qquad \qquad +  \frac{3 \lambda }{\rho_2} \sqrt{\kappa_1(\kappa_3 - \lambda)} \left(\sqrt{\kappa_1(\kappa_3 - \lambda)} + 2\kappa_2 \right) .
\end{align*}
This gives us the inequality
\begin{align*}
- \frac{n-1}{n} \rho_2 \lambda  & \geq \rho_1 \rho_2 - \left(3 \sqrt{\kappa_1(\kappa_3 - \lambda)} + 2\kappa_2 \right)  \left(\sqrt{\kappa_1(\kappa_3 - \lambda)} + 2\kappa_2 \right) .
\end{align*}
Define $s = \sqrt{\kappa_1(\kappa_3 - \lambda)}$. Then
\begin{align*}
\frac{n-1}{n} \left(\frac{\rho_2}{\kappa_1} s^2  - \rho_2 \kappa_3\right) & \geq \rho_1 \rho_2 - 3s^2 - 8 s \kappa_2 - 4\kappa_2^2
\end{align*}
or
\begin{align*}
\left( \frac{n-1}{n}\frac{\rho_2}{\kappa_1} + 3 \right) s^2 + 8 s \kappa_2- \left(\rho_1 \rho_2 - 4\kappa_2^2  + \frac{n-1}{n} \rho_2 \kappa_3 \right) & \geq 0 .
\end{align*}
From this inequality, we have
\begin{align*}
s \geq & \sqrt{\frac{\rho_1 \rho_2 - 4\kappa_2^2  + \frac{n-1}{n} \rho_2 \kappa_3}{\frac{n-1}{n}\frac{\rho_2}{\kappa_1} + 3} + \left( \frac{4 \kappa_2}{\frac{n-1}{n}\frac{\rho_2}{\kappa_1} + 3} \right)^2 } -\frac{4 \kappa_2}{\frac{n-1}{n}\frac{\rho_2}{\kappa_1} + 3}.
\end{align*}
The result follows.

\section{Examples} \label{sec:Examples}

\subsection{Example with non-integrable orthogonal complement}\label{ex:non-integrable}
Consider the Lie group $\SO(4)$ with Lie algebra $\mathfrak{so}(4)$, the latter consisting of all $4\times4$ skew-symmetric matrices. This Lie algebra is spanned by matrices $B^{ij}=e_{i}e_{j}^t-e_{j}e_{i}^t$, where $e_1$, $e_2$, $e_3$, $e_4$ is the standard basis of $\mathbb{R}^4$. By abuse of notation, we will use the same symbol to denote an element of the Lie algebra and the vector field on $\SO(4)$ obtained by left translation. Consider the horizontal bundle
$$H = \Span \{ X_1, X_2, X_3, X_4 \} := \Span \{ B^{12}, B^{14}, B^{24}, B^{34} \},$$
and define a metric $g_H$ such that $X_1, X_2, X_3, X_4$ is an orthonormal basis.
We will show that the first eigenvalue $\lambda_1$ of the operator $$L = X_1^2 + X_2^2 + X_3^2 + X_4^2$$ is bounded by
$$\lambda_1 \le -\frac{8}{51}.$$

Define an inner product on the Lie algebra by $\left\langle A, B \right\rangle_{\mathfrak{so}(4)} =  - \frac{1}{2} \tr AB$ and extend this to a Riemannian metric $g$ by left translation. Observe that $g$ is a bi-invariant metric on $\SO(4)$ which tames $g_H$. Furthermore, if $\mu$ is the volume form of $g$, then $L = \Delta_{H,\mu}$.
The vertical bundle $$V=\Span\{Z_1, Z_2\} : = \Span \{ B^{13}, B^{23}\},$$
is not integrable, but properties \eqref{item:A}, \eqref{item:B} and \eqref{item:C} are still satisfied.
From bi-invariance, we also have that \eqref{TGF} holds, so we can use the Bott connection \eqref{Bott} from Remark~\ref{re:PostMain}.
The bracket relations, connection and torsion are given by
$$\resizebox{.9\hsize}{!}{$ \begin{array}{l|ccccccc}
\left[ X,Y\right] &X_1&X_2&X_3&X_4&Z_1&Z_2\\ \hline
X_1 & 0& - X_3& X_2 & 0 & - Z_2& Z_1\\
X_2 & X_3 & 0 & - X_1 &- Z_1 & X_4 &0\\
X_3 & - X_2&  X_1 & 0 &- Z_2& 0 & X_4\\
X_4 & 0 & Z_1 & Z_2 & 0 & -X_2 & -X_3 \\
Z_1 & Z_2 & - X_4 & 0 & X_2 & 0 &- X_1\\
Z_2& - Z_1 & 0 & - X_4 & X_3 & X_1 & 0
\end{array}\qquad  \begin{array}{l|ccccccc}
\nabla_{X} Y & X_1 & X_2 & X_3 & X_4 & Z_1 & Z_2 \\ \hline
X_1 & 0 & -\frac{1}{2} X_3 & \frac{1}{2} X_2 & 0 & - Z_2 & Z_1\\
X_2 & \frac{1}{2} X_3 & 0 & -\frac{1}{2} X_1 & 0 & 0 & 0 \\
X_3 & - \frac{1}{2} X_2 & \frac{1}{2} X_1 & 0 & 0 & 0 & 0 \\
X_4 & 0 & 0 & 0 & 0 & 0 & 0 \\
Z_1 & 0 & -X_4 & 0 & X_2 & 0 & 0 \\
Z_2 & 0 & 0 & - X_4& X_3& 0 & 0
\end{array}$}$$
$$\resizebox{.45\hsize}{!}{$ \begin{array}{l|ccccccc}
T( X,Y ) & X_1 & X_2 & X_3 & X_4 & Z_1 & Z_2 \\ \hline
X_1 & 0 & 0 & 0 & 0 & 0 & 0 \\
X_2 & 0 & 0 & 0 & Z_1 & 0 & 0 \\
X_3 & 0 & 0 & 0 & Z_2 & 0 & 0 \\
X_4 & 0 & - Z_1 & - Z_2 & 0 & 0 & 0 \\
Z_1 & 0 & 0 & 0 & 0 & 0 & X_1 \\
Z_2 & 0 & 0 & 0 & 0 & -X_1 & 0
\end{array}$}$$
Additionally, $\Ric$ is given by
\[ \begin{array}{l|ccccc}
\Ric(\flat^g X)(Y) & X_1 & X_2 & X_3 & X_4 \\ \hline
X_1 & 1/2 & 0 & 0 & 0 \\
X_2 & 0 & 3/2 & 0 & 0 \\
X_3 & 0 & 0 & 3/2 & 0 \\
X_4 & 0 & 0 & 0 & 2
\end{array}\]
hence $\rho_1= 1/2$. Using the tables above we have $B = 0$ and $\scrW = 0$, hence $\kappa_2=\kappa_3=0.$
The last two constants can be set to $\rho_2=\kappa_1=2,$ since $$|\iota_{T}\alpha|^{2}_{ {g_H^*}^{\otimes 2}}= 2 (\alpha(Z_1)^2 + \alpha(Z_2)^2) = 2 | \alpha|_{g_V^*}^2,$$
$$|S(\alpha )|^{2}_{g_H^*\otimes g_{V}^*}=  \alpha(X_2)^2+ \alpha(X_3)^2+ 2\alpha(X_4)^2 \le 2|\alpha|^{2}_{g_H^*}.$$
Using Theorem~\ref{th:main} with the constants $\rho_1=\frac{1}{2}$, $\rho_2=\kappa_1=2$ and $\kappa_2=\kappa_3=0$ we obtain the bound.

\subsection{Example with conformal change of metric and the Yang-Mills condition} \label{sec:YM}
For the second example consider the Lie group $$\SU( 2 )=\left\{ a\in M_{2\times 2}( \mathbb{C} ):a=\left( \begin{array}{cc}z_1&z_2\\-\bar{z_2}&\bar{z_1}\end{array} \right), \det( a )=1 \right\}.$$
Its Lie algebra $\mathfrak{su}(2)$ is spanned by the matrices
   \[X=\frac{1}{\sqrt{2}}\left( \begin{array}{cc}
      0&i\\i&0
   \end{array}\right),\quad Y=\frac{1}{\sqrt{2}}\left( \begin{array}{cc}
      0&-1\\1&0
   \end{array}\right)\quad\text{and}\quad Z=\frac{1}{\sqrt{2}}\left( \begin{array}{cc}
      i&0\\0&-i
   \end{array}\right).\]
   Let $g_H$ be a sub-Riemannian metric on the subspace spanned by $X$ and $Y$ making the vector fields orthonormal.
   For any smooth function $f$ satisfying $Z( f )=0$, define a new sub-Riemannian metric by $g_{f,H} =e^{2f} g_H$. Then $X_f=e^{-f}X$ and $Y_f=e^{-f}Y$ is an orthonormal frame for $g_{f,H}$. Let $g_{f}$ be the taming metric making $X_f,Y_f,Z$ orthonormal. Define the connection $\nabla$ as in equation~\eqref{Bott} with respect to the taming metric $g_f$. In this setting we have the sub-Laplacian $$L_f =e^{-2f}(X^2+Y^2 ) = : e^{-2f} L.$$
The bracket relations, connection and torsion are given by
$$[X_f,Y_f] = e^{-2f}Z-X_f( f )Y_f+Y_f( f )X_f, \quad [Y_f, Z] = X_f, \quad [Z, X_f] = Y_f,$$
$$\resizebox{.8\hsize}{!}{ $\begin{array}{l|ccc}
\nabla_{X}Y &X_f&Y_f&Z\\ \hline
X_f & - (Y_f  f )Y_f & (Y_{f} f )X_f & 0 \\
Y_f & (X_{f} f ) Y_f & -(X_{f} f ) X_f & 0 \\
Z & Y_{f} & -X_f & 0 
\end{array}\qquad 
\begin{array}{l|ccc}
T( X,Y ) & X_f & Y_f & Z \\ \hline
X_f & 0 & -e^{-2f}Z & 0 \\
Y_f & e^{-2f}Z & 0 & 0\\
Z & 0 & 0 & 0 \end{array}$}$$
The constant $\rho_1$ is
$$\rho_1=\inf_{a\in \SU( 2 )}e^{-2f(a)}( 1-Lf(a) )$$ 
since the only non-zero components of the curvature are
$$\langle R(Y_f,X_f) X_f, Y_f \rangle_{g_f} = \langle R(X_f,Y_f) Y_f, X_f \rangle_{g_f} = e^{-2f}( 1-Lf ).$$
Observe that $$\scrW =0,\quad | \iota_T \alpha|^2_{g_{f,H}^{*\otimes 2}} = 2 e^{-4f} | \alpha|^2_{g_V^*},\quad \text{and}
\quad| S(\alpha) |_{g_{f,H}^* \otimes g_V^*}^2 = e^{-4f} | \alpha |_{g_{f,H}^*}^2.$$
Hence we can choose
$$\rho_2 = 2 \inf_{a \in \SU(2)} e^{-4f(a)}, \quad \kappa_1 = \sup_{a \in \SU(2)} e^{-4f(a)}.$$
Finally, we have
$$B: X_f \mapsto -2 e^{-2f} (Y_f f) Z, \qquad Y_f \mapsto 2 e^{-2f} (X_f f) Z,$$
so
\begin{align*} \langle \iota_B \alpha, \alpha\rangle_{g_{f,H}^*} &= 2 e^{-2f} \alpha(Z) \langle -  (Y_f f) X_f + (X_f f ) Y_f, \sharp^{f,H} \alpha \rangle_{g_{f,H}} \\
& \leq 2e^{-2f} | df |_{g_{f,H}^*} |\alpha|_{g_V^*}  |\alpha|_{g_{f,H}^*}, \end{align*}
and we can choose
$$\kappa_2 = \sup_{a \in \SU(2)} e^{-3f} | df |_{g_H^*}.$$
Inserting this in the formula of Theorem~\ref{th:main}, we have the spectral gap estimate.

To give a more concrete formula, define $m = \inf_{a \in \SU(2)} f(a)$ and $m + M = \sup_{a \in \SU(2)} f(a)$. Using the inequalities $$\rho_1 \leq e^{-2(m+M)} (1 - \| Lf\|_{L^\infty})\quad \text{and} \quad \kappa_2 \leq e^{-3m} \| df \|_{L^\infty(g_H^*)},$$ we obtain
$$\textstyle-\lambda_{1}\ge e^{-2m} \left( \sqrt{\frac{ 2 e^{-6M} (1 - \| Lf \|_{L^\infty} )  - 4  \| df \|^2_{L^\infty( g^* )}}{e^{ -4M} + 3 } + \left( \frac{4 \| df\|_{L^\infty(g^*)}  }{ e^{ - 4M} + 3 } \right)^2 } -\frac{4 \| df \|_{L^\infty(g^*)} }{ e^{- 4M}  + 3 } \right)^{2}.$$
For the special case
$$f(a) = c |z_1|^2, \qquad c >0,$$
we have $m = 0$ and $M = c$. Furthermore, we have
$$X( f )=\frac{ci}{\sqrt{2}}( z_2\bar{z}_1-\bar{z}_2z_1 ) = \sqrt{2} c \mathrm{Im}(z_1 \bar{z}_2),\quad Y( f )=\frac{c}{\sqrt{2}}( z_2\bar{z}_1+\bar{z}_2z_1 ) = \sqrt{2} c \mathrm{Re}(z_1 \bar{z}_2)$$ and $$X( X( f ) )=Y( Y( f ) )=c( |z_2|^{2}-|z_1|^{2} ).$$
Hence
$$| df |_{g_H^*}^2 = 2c^2 |z_1|^2 |z_2|^2, \qquad Lf = 2c (|z_2|^2 - |z_1|^2),$$
so $\| df \|_{L^\infty(g_H^*)} = \| Lf \|_{L^\infty} = 2c$. In conclusion
$$-\lambda_{1}\ge \left(\sqrt{\frac{ 2 e^{-6c} (1 - 2c )  - 16  c^2  }{ e^{ -4c} + 3 } + \left( \frac{8c}{e^{ - 4c} + 3 } \right)^2 } -\frac{8 c }{ e^{- 4c}  + 3 } \right)^{2}.$$
This estimate is non-trivial whenever $e^{-6c} (1-2c) > 8c^2$. In particular, this is true when $c \in [0, 1.17139]$.

\subsection{Example with a non-totally geodesic foliation}
Consider the Lie group $\SU(2)\times \SU(2)$ with its Lie algebra $\mathfrak{su}(2)\times \mathfrak{su}(2)$. Let $X_j, Y_j, Z_j$, $j =1,2$ be the matrices given in the previous example for each copy. Define a new frame by
$$X^\pm = X_1 \pm X_2, \quad Y^\pm = Y_1 \pm Y_2, \quad Z^\pm = Z_1 \pm Z_2.$$
For any real number $c\in \mathbb{R}$, define
$$X^c = X^- + c X^+.$$
Let $H^c$ be the span of $X^c,Y^-$ and $Z^-$, with the corresponding metric $g_{H^c}$ making the vector fields $X^c,Y^-$ and $Z^-$ orthonormal. Define the taming metric $g_c$ by the vector fields $X^c$, $Y^-$, $Z^-$, $X^+$, $Y^+$ and $Z^+$ forming an orthonormal frame. 
Denote by $\mu$ the volume density of $g_c$, and note that it is independent of $c$. The sub-Laplacian then becomes
$$L_c = \Delta_{H^c, \mu} = (X^c)^2 + (Y^-)^2 + (Z^-)^2.$$
For this basis the bracket relations become
$$\resizebox{.8\hsize}{!}{$\begin{array}{c|cccccc}
{[A,B]} & X^c & Y^- & Z^- & X^+ & Y^+ & Z^+ \\ \hline
X^c &0&Z^++cZ^-&-Y^+-cY^-&0&Z^-+cZ^+&-Y^--cY^+ \\
Y^- & - Z^+-cZ^- & 0 & X^+ & -Z^- & 0 & X^c-cX^+\\
Z^- &Y^++cY^-&-X^+&0&Y^-&cX^+-X^c&0 \\
X^+ &0&Z^-&-Y^-&0&-Z^+&-Y^+ \\
Y^+ &-Z^--cZ^+&0&X^c-cX^+&Z^+&0&X^+ \\
Z^+ & Y^-+cY^+&cX^+-X^c&0&Y^+&-X^+&0
\end{array}$}$$ The complement $V=\Span\{X^+,Y^+,Z^+\}$ is integrable and metric preserving, hence assumption \eqref{item:A} and \eqref{item:B} are satisfied. It is also straightforward to check that $\tr_{{H^c}}( \calL_{A}g_c )(\times,\times)=0$ whenever $A$ is vertical.
Note that we can not use the Bott connection for $c \neq 0$ since $$(\calL_{Y^-}\pr_{V}^{*}g_V)(r X^+ +s Z^+,r X^++s Z^+)=-c rs , \qquad r,s \in \mathbb{R}^n.$$
Define the connection $\nabla$ by $$\nabla_{X}Y=\pr_{H^c}\nabla^{g_c}_{\pr_{H^c}X}\pr_{H^c}Y+\pr_{H^c}\left[\pr_{V}X,\pr_{H^c}X\right]+\pr_{V}\nabla'_{\pr_{V}X}\pr_{V}Y,$$
where $\nabla'$ is defined by making $X^+,Y^+$ and $Z^+$ into a parallel frame. The covariant derivative and torsion are 
$$\resizebox{.9\hsize}{!}{$\begin{array}{c|cccccc}
{\nabla_AB} & X^c & Y^- & Z^- & X^+ & Y^+ & Z^+ \\ \hline
X^c &0&cZ^-&-cY^-&0&0&0 \\
Y^- & 0&0&0&0&0&0\\
Z^- & 0&0&0&0&0&0\\
X^+ &0&Z^-&-Y^-&0&0&0\\
Y^+ &-Z^-&0&X^c&0&0&0\\
Z^+ & Y^-&-X^c&0&0&0&0
\end{array} \qquad \begin{array}{c|cccccc}
   {T(A,B)} & X^c & Y^- & Z^- & X^+ & Y^+ & Z^+ \\ \hline
X^c &0&-Z^+&Y^+&0&-cZ^+&cY^+ \\
Y^- & Z^+&0&-X^+&0&0&cX^+\\
Z^- &-Y^+&X^+&0&0&-cX^+&0 \\
X^+ &0&0&0&0&Z^+&Y^+ \\
Y^+ &cZ^+&0&cX^+&-Z^+&0&-X^+ \\
Z^+ & -cY^+&-cX^+&0&-Y^+&X^+&0
\end{array}$}$$
The only non-zero curvature terms that affect the tensor $\Ric$ are given by
\begin{align*} & 1=  \langle R^{\nabla}(Y^-, X^c) X^c, Y^- \rangle_{g_{H^c}} = \langle R^{\nabla}(Z^-, X^c) X^c, Z^- \rangle_{g_{H^c}}  = \langle R^{\nabla}(X^c, Y^-) Y^-, X^c \rangle_{g_{H^c}} \\
& = \langle R^{\nabla}(Z^-, Y^-) Y^-, Z^- \rangle_{g_{H^c}} = \langle R^{\nabla}(X^c, Z^-) Z^-, X^c \rangle_{g_{H^c}} = \langle R^{\nabla}(Y^-, Z^-) Z^-, Y^- \rangle_{g_{H^c}} .\end{align*}
Hence $\rho_1=2$. Furthermore, $|\iota_T \alpha|^2_{g_{H^c}^{*\otimes 2}} =2 | \alpha |^2_{g_V^*}$, while $|S(\alpha) |^2_{g_{H^c}^* \otimes g_V^*} = 2 |\alpha|^2_{g_{H^c}^*}$, so we can choose $\rho_2=\kappa_1=2$. Next, we have
$$B: X^c \mapsto - 2 cX^+, \quad Y^- \mapsto 0, \qquad Z^- \mapsto 0,$$
and we can set $\kappa_2 = |c|$.  
To calculate $\kappa_3$, we see that $$\alpha( \tr_{H^c} T( \times,T( \times,\sharp^{g_{V}}\alpha ) ))= - c^{2}( \alpha(Y^+)^2+\alpha(Z^+)^{2} ),$$
$$|\iota_T\alpha |^{2}_{ {g_{H^c}}^* \otimes g_V^*}=c^{2}(2\alpha(X^+)^2 +  \alpha(Y^+)^{2}+\alpha(Z^+)^2 ),$$ and $$\alpha( ( \tr_{H^c}\nabla_{\times}T )( \times,\sharp^{g_{V}}\alpha ) )=0.$$ In conclusion, we have $\langle \scrW(\alpha), \alpha \rangle_{g_V^*} = 2c^2 |\alpha|^2_{g_V}$ leading to $\kappa_3=2c^{2}$.  
Using Theorem~\ref{th:main} we have that
$$-\lambda_{1} \ge \frac{2}{121} \left(\sqrt{33 + 25 c^2 } -6 |c|  \right)^{2}- 2c^2.$$
This estimate on the spectral gap is non-trivial when $|c| <  \frac{1}{4} \sqrt{\frac{11}{123}}.$

\bibliographystyle{amsplain}


\end{document}